\newtheorem{theorem}{Theorem}
\theoremstyle{plain}
\newtheorem{corollary}{Corollary}
\newtheorem{example}{Example}
\newtheorem{lemma}{Lemma}
\numberwithin{equation}{section}
 \numberwithin{equation}{section}
\begin{document}

\title[Some Gr\"{u}ss type inequalities...]{Some Gr\"{u}ss type inequalities for Fr\'echet differentiable mappings}
\author[A. G. Ghazanfari]{T. Teimouri-Azadbakht$^{1}$, A. G. Ghazanfari$^{2*}$}

\address{$^{1,2}$Department of Mathematics\\
Lorestan University, P.O. Box 465\\
Khoramabad, Iran.}

\thanks{*Corresponding author}

\email{$^{1}$t.azadbakhat88@gmail.com, $^{2}$ghazanfari.a@lu.ac.ir}

\subjclass[2010]{26D10, 46C05, 46L08}

\keywords{ Fr\'echet differentiable mappings, $C^*$-modules, Gr\"{u}ss inequality}

\begin{abstract}\noindent
 Let $X$ be a Hilbert $C^{*}$-module on $C^{*}$-algebra $A$ and $p\in A$. We denote by $D_{p}(A ,X)$ the set of all
 continuous functions $f : A \rightarrow X$, which are Fr\'{e}chet differentiable on a open neighborhood $U$ of $p$.
Then, we introduce some generalized semi-inner products on $D_{p}(A ,X)$, and using them some Gr\"{u}ss type inequalities in semi-inner product
$C^*$-module $D_{p}(A ,X)$ and $D_{p}(A ,X^n)$ are established.
\end{abstract}
\maketitle

\section{Introduction}\noindent

Let $A,X$ be two normed vector spaces over $\mathbb{K}(\mathbb{K} =\mathbb{C}, \mathbb{R})$, we recall that a function
$f:A \rightarrow X$ is Fr\'echet differentiable in $p \in A$, if there exists a bounded linear mapping $ u: A \rightarrow  X$ such that
\begin{equation}
\lim_{h\rightarrow 0} \frac{\Vert f(p+h)- f(p)- u(h)\Vert_X}{\Vert h \Vert_A}=0,
\end{equation}
and in this case, we denote $u$ by $Df(p)$. Let $D_{p}(A ,X)$ denotes the set of all
 continuous functions $f : A \rightarrow X$, which are
Fr\'{e}chet differentiable on a open neighborhood (say $U$) of $ p $.
The main purpose of differential calculus consists
in getting some information using an affine approximation to a given nonlinear
map around a given point. In many applications it is important to have Fr\'echet derivatives of $f$,
since they provide genuine local linear approximation to $f$. For instance
let $U$ be an open subset of $A$ containing the segment $[x,y]=\{(1-\theta)x+\theta y: 0\leq \theta\leq 1\}$,
and let $f :A\rightarrow X$ be Fr\'echet differentiable on $U$, then
the following mean value formula holds
\begin{align}
\|f(x)-f(y)\leq \|x-y\|\sup_{0< \theta< 1}\left\|Df((1-\theta)x+\theta y)\right\|.
\end{align}

For two Lebesgue integrable functions $f,g:[a,b]\rightarrow \mathbb{R}$, consider the
\v{C}eby$\breve{s}$ev functional:

\begin{equation*}
T(f,g):=\frac{1}{b-a}\int_{a}^{b}f(t)g(t)dt-\frac{1}{b-a}%
\int_{a}^{b}f(t)dt\frac{1}{b-a}\int_{a}^{b}g(t)dt.
\end{equation*}%

In 1934, G. Gr\"{u}ss \cite{gru} showed that
\begin{equation}\label{1.1}
\left\vert T(f,g)\right\vert \leq \frac{1}{4}%
(M-m)(N-n),
\end{equation}%
provided $m,M,n,N$ are real numbers with the property $-\infty <m\leq f\leq
M<\infty $ and $-\infty <n\leq g\leq N<\infty \quad \text{a.e. on }[a,b].$
The constant $\frac{1}{4}$ is best possible in the sense that it cannot be
replaced by a smaller quantity and is achieved for
\[
f(x)=g(x)=sgn \Big(x-\frac{a+b}{2}\Big).
\]
The discrete version of (\ref{1.1}) states that:
If $a\leq a_i\leq A,~b\leq b_i\leq B,~(i=1,...,n)$ where $a,A,b,B,a_i,b_i$ are real
numbers, then
\begin{equation}\label{1.2}
\left| \frac{1}{n}\sum_{i=1}^na_ib_i-\frac{1}{n}\sum_{i=1}^na_i.\frac{1}{n}\sum_{i=1}^nb_i\right|\leq \frac{1}{4}(A-a)(B-b),
\end{equation}
where the constant $\frac{1}{4}$ is the best possible for an arbitrary $n\geq 1$. Some refinements of the
discrete version of Gr\"{u}ss inequality (\ref{1.2}) for inner product spaces are available in \cite{dra,kech}.

\begin{theorem}
Let $(H; \langle \cdot, \cdot\rangle)$ and $\mathbb{K}$ be as above and
$\overline{x}=(x_1, . . . , x_n)\in H^n$, $\overline{\alpha}=(\alpha_1, . . . , \alpha_n) \in \mathbb{K}^n$ and
$\overline{p}=(p_1,...,p_n)$ a probability vector. If $x,X \in H$ are
such that
\begin{equation*}
 Re \left<X- x_i, x_i - x \right>\geq 0~for~ all~ i\in \{1, . . . , n\},
\end{equation*}
or, equivalently,
\begin{equation*}
\left\|x_i-\frac{x+X}{2}\right\|\leq\frac{1}{2}\|X-x\| ~for~ all~ i\in \{1, . . . , n\},
\end{equation*}
holds, then the following inequality holds
\begin{align}\label{1.4}
\left\|\sum_{i=1}^np_i\alpha_i x_i-\sum_{i=1}^n p_i \alpha_i \sum_{i=1}^n p_ix_i\right\|
&\leq\frac{1}{2}\|X-x\|\sum_{i=1}^np_i\left|\alpha_i-\sum_{j=1}^np_j\alpha_j\right|\notag\\
&\leq\frac{1}{2}\|X-x\|\left[\sum_{i=1}^np_i|\alpha_i|^2-\left|\sum_{i=1}^np_i\alpha_i\right|^2\right]^{\frac{1}{2}}.
\end{align}
The constant $\frac{1}{2}$ in the first and second inequalities is best possible.
\end{theorem}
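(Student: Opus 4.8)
The plan is to reduce the left-hand side to a single $\overline{p}$-weighted sum that annihilates constants, and then estimate it termwise. Write $\bar\alpha := \sum_{j=1}^n p_j \alpha_j$ for the weighted mean of the scalars. Since $\overline{p}$ is a probability vector, a direct rearrangement gives
\begin{equation*}
\sum_{i=1}^n p_i \alpha_i x_i - \Big(\sum_{i=1}^n p_i \alpha_i\Big)\Big(\sum_{i=1}^n p_i x_i\Big) = \sum_{i=1}^n p_i (\alpha_i - \bar\alpha)\, x_i .
\end{equation*}
Moreover $\sum_{i=1}^n p_i(\alpha_i - \bar\alpha) = \bar\alpha - \bar\alpha = 0$, so for the fixed midpoint $m := \tfrac{x+X}{2}$ we may subtract $p_i(\alpha_i-\bar\alpha)m$ from each summand without changing the sum:
\begin{equation*}
\sum_{i=1}^n p_i (\alpha_i - \bar\alpha)\, x_i = \sum_{i=1}^n p_i (\alpha_i - \bar\alpha)\Big(x_i - \tfrac{x+X}{2}\Big).
\end{equation*}
This centering is the decisive step: it is exactly what lets the hypothesis enter.

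For the first inequality I would apply the triangle inequality in $H$ and then invoke the second (equivalent) form of the hypothesis, $\|x_i - \tfrac{x+X}{2}\| \le \tfrac12\|X-x\|$:
\begin{align*}
\Big\| \sum_{i=1}^n p_i(\alpha_i-\bar\alpha)\big(x_i - \tfrac{x+X}{2}\big)\Big\|
&\le \sum_{i=1}^n p_i |\alpha_i - \bar\alpha|\,\big\|x_i - \tfrac{x+X}{2}\big\| \\
&\le \tfrac12\|X-x\| \sum_{i=1}^n p_i |\alpha_i - \bar\alpha|,
\end{align*}
which is precisely the first bound. Should I prefer to start from the inner-product form $\mathrm{Re}\langle X-x_i, x_i-x\rangle \ge 0$, I would first record the elementary identity $\mathrm{Re}\langle X-x_i, x_i-x\rangle = \tfrac14\|X-x\|^2 - \|x_i - \tfrac{x+X}{2}\|^2$, obtained by writing $X-x_i = \tfrac{X-x}{2}-(x_i-m)$ and $x_i-x = \tfrac{X-x}{2}+(x_i-m)$ and expanding; this establishes the stated equivalence of the two hypotheses.

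For the second inequality I would pass from the weighted $\ell^1$-type sum to an $\ell^2$-type one by the weighted Cauchy--Schwarz inequality, using $\sum_i p_i = 1$:
\begin{equation*}
\sum_{i=1}^n p_i |\alpha_i - \bar\alpha| = \sum_{i=1}^n p_i^{1/2}\,\big(p_i^{1/2}|\alpha_i - \bar\alpha|\big) \le \Big(\sum_{i=1}^n p_i |\alpha_i - \bar\alpha|^2\Big)^{1/2}.
\end{equation*}
A short expansion of $|\alpha_i - \bar\alpha|^2$, using $\sum_i p_i\overline{\alpha_i} = \overline{\bar\alpha}$, then yields the weighted variance identity $\sum_{i=1}^n p_i|\alpha_i - \bar\alpha|^2 = \sum_{i=1}^n p_i|\alpha_i|^2 - |\bar\alpha|^2$, which is exactly the quantity under the square root in the second bound. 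Chaining the last two displays with the first inequality finishes the estimate.

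The computational steps above are routine; the one assertion requiring a different kind of argument is that the constant $\tfrac12$ is best possible. For this I would not argue by inequalities but instead exhibit an extremal configuration saturating the whole chain. Taking $n=2$, $H=\mathbb{K}$, $p_1=p_2=\tfrac12$, the endpoints $x_1=x$ and $x_2=X$ (so that $\|x_i-\tfrac{x+X}{2}\|=\tfrac12\|X-x\|$, i.e.\ the hypothesis holds with equality), and real scalars $\alpha_1\neq\alpha_2$, one checks that the left-hand side equals $\tfrac12|\alpha_1-\alpha_2|$ and that both right-hand sides also reduce to $\tfrac12|\alpha_1-\alpha_2|$. Thus every intermediate inequality---the triangle inequality and the Cauchy--Schwarz step---collapses to equality simultaneously, so no constant smaller than $\tfrac12$ can hold. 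Verifying carefully that this choice is admissible and really saturates both bounds is the step I expect to treat last.
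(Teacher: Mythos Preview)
Your argument is correct and standard: center by subtracting the weighted mean $\bar\alpha$ of the scalars, then subtract the midpoint $m=\tfrac{x+X}{2}$ (which is free since the centered coefficients sum to zero), apply the triangle inequality and the hypothesis for the first bound, and pass from the weighted $\ell^1$ to the weighted $\ell^2$ sum by Cauchy--Schwarz for the second. The equivalence of the two hypotheses via the polarization identity is also handled cleanly.

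One small slip in your sharpness paragraph: with $n=2$, $p_1=p_2=\tfrac12$, $x_1=x$, $x_2=X$, the left-hand side computes to $\tfrac14|\alpha_1-\alpha_2|\,\|X-x\|$, not $\tfrac12|\alpha_1-\alpha_2|$, and the two right-hand sides likewise equal $\tfrac14|\alpha_1-\alpha_2|\,\|X-x\|$. The conclusion is unaffected---all three quantities coincide, so $\tfrac12$ is sharp---but the numerical values you wrote down drop the $\|X-x\|$ factor (or implicitly assume $\|X-x\|=2$).

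As for comparison with the paper: this theorem is stated in the introduction as a known result drawn from the literature (Dragomir's work on discrete Gr\"uss-type inequalities in inner product spaces), and the paper does not supply its own proof. The paper does, however, reuse essentially your centering-and-triangle-inequality maneuver later, in the proof of its Corollary~1, so your approach is fully in line with the techniques the authors themselves employ.
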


In recent years several refinements and generalizations have been considered for
the Gr\"uss inequality. We would like to refer the reader to \cite{dra1, gha, gru, ili, kech, li, mit}
and references therein for more information.

In this paper, for every Hilbert $C^*$-module $X$ over a $C^*$-algebra $A$, some Gr\"{u}ss type inequalities in semi-inner product $C^*$-module $D_{p}(A ,X^n)$ are established.
 We also for two arbitrary Banach $*$-algebras, define a norm and an involution map on $D_{p}(A ,B)$ and prove that $D_p(A,B)$ is a Banach $*$-algebra.


\section{Gr\"{u}ss type inequalities for differentiable mappings}

Let $A$ be a $C^*$-algebra. A semi-inner product module
over $A$ is a right module $X$ over $A$ together with a generalized semi-inner product,
that is with a mapping $\langle.,.\rangle$ on $X\times X$, which is $A$-valued and has the following properties:
\begin{enumerate}
\item[(i)]
 $\langle x,y+z\rangle=\langle x,y\rangle+\langle x,z\rangle$ for all $x,y,z\in X,$
\item[(ii)]
$\left\langle x,ya\right\rangle=\left\langle x,y\right\rangle a$ for $x,y\in X, a\in A$,
\item[(iii)]
$\langle x,y\rangle ^*=\langle y,x\rangle$ for all $x,y\in X$,
\item[(iv)]
$\left\langle x,x\right\rangle \geq 0$ for $x\in X$.
\end{enumerate}
We will say that $X$ is a semi-inner product $C^*$-module.
If, in addition,
\begin{enumerate}
\item[(v)]
$\langle x,x\rangle=0$ implies $x=0$,
\end{enumerate}
then $\langle .,.\rangle$ is called a generalized inner
product and $X$ is called an inner product module over $A$ or an inner product $C^*$-module.
An inner product $C^*$-module which is complete with respect to its norm $\|x\|=\|\langle x,x\rangle\|^\frac{1}{2}$,
is called a Hilbert $C^*$-module.

As we can see, an inner product module obeys the same axioms as an ordinary
inner product space, except that the inner product takes values in a more general
structure rather than in the field of complex numbers.

If $A$ is a $C^*$-algebra and $X$ is a semi-inner product $A$-module, then the following Schwarz
inequality holds:
\begin{equation}\label{2.1}
\langle x,y\rangle\langle y,x\rangle\leq \|\langle x,x\rangle\|\langle y,y\rangle~(x,y\in X)
\end {equation}
($e.g.$ \cite[Proposition 1.1]{lan}).

\begin{theorem}\label{t2}\cite{gha}
Let $A$ be a  $C^{*}$- Algebra, $X$ a Hilbert  $C^{*}$- module. If $x, y, e \in X$,
$\langle e , e \rangle$ is an idempotent in $A$ and $\alpha, \beta, \lambda, \mu$ are complex numbers such that
\begin{equation*}
\left\Vert x- \frac{\alpha + \beta}{2}e\right\Vert \leq \frac{1}{2}\vert \alpha - \beta \vert,\hspace{1 cm}
\left\Vert y- \frac{\lambda + \mu}{2}e\right\Vert\leq \frac{1}{2}\vert \lambda - \mu \vert,
\end{equation*}
hold, then one has the following inequality;
\begin{equation*}
\left\Vert \langle x ,y\rangle - \langle x , e \rangle \langle e ,y \rangle\right\Vert \leq \frac{1}{4} \mid \alpha - \beta \vert \vert \lambda - \mu \mid.
\end{equation*}
\end{theorem}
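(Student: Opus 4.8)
The plan is to reduce the inequality to the module Schwarz inequality (\ref{2.1}) via a Gr\"{u}ss-type identity. Write $p:=\langle e,e\rangle$, which by property (iii) is self-adjoint and by hypothesis idempotent, hence a projection in $A$. The first step is the auxiliary fact $ep=e$. I would prove it by expanding $\langle ep-e,\,ep-e\rangle$ using (i)--(iii): since $p^{*}=p=p^{2}$, every resulting term equals $p$ up to sign and they cancel in pairs, so the expression equals $0$, and the definiteness property (v) of the Hilbert module forces $ep=e$.

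Next I would introduce the residuals $\tilde{x}:=x-e\langle e,x\rangle$ and $\tilde{y}:=y-e\langle e,y\rangle$ and establish the identity
\begin{equation*}
\langle x,y\rangle-\langle x,e\rangle\langle e,y\rangle=\langle \tilde{x},\tilde{y}\rangle .
\end{equation*}
Expanding the right-hand side and repeatedly using $\langle ea,z\rangle=a^{*}\langle e,z\rangle$ (a consequence of (ii) and (iii)) together with $\langle z,e\rangle^{*}=\langle e,z\rangle$, each of the three correction terms $\langle x,e\langle e,y\rangle\rangle$, $\langle e\langle e,x\rangle,y\rangle$ and $\langle e\langle e,x\rangle,e\langle e,y\rangle\rangle$ reduces to $\langle x,e\rangle\langle e,y\rangle$; here the last one produces $\langle x,e\rangle\,p\,\langle e,y\rangle$, which collapses to $\langle x,e\rangle\langle e,y\rangle$ precisely because $ep=e$ gives $\langle x,e\rangle p=\langle x,ep\rangle=\langle x,e\rangle$. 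Two of these cancel the third, leaving the left-hand side.

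The third step applies (\ref{2.1}) to $\tilde{x},\tilde{y}$, which after taking norms yields $\|\langle \tilde{x},\tilde{y}\rangle\|\le\|\tilde{x}\|\,\|\tilde{y}\|$. It then remains to estimate the residual norms by the data. Setting $z:=x-\tfrac{\alpha+\beta}{2}e$, I would note that the residual is unchanged, $\tilde{x}=z-e\langle e,z\rangle$, because $e\langle e,\tfrac{\alpha+\beta}{2}e\rangle=\tfrac{\alpha+\beta}{2}\,ep=\tfrac{\alpha+\beta}{2}\,e$. Applying the identity with $x=y=z$ gives
\begin{equation*}
\langle \tilde{x},\tilde{x}\rangle=\langle z,z\rangle-\langle z,e\rangle\langle z,e\rangle^{*}\le\langle z,z\rangle ,
\end{equation*}
since the subtracted term is positive by (iv); hence $\|\tilde{x}\|\le\|z\|=\left\|x-\tfrac{\alpha+\beta}{2}e\right\|\le\tfrac12|\alpha-\beta|$. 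The same computation with $y$ gives $\|\tilde{y}\|\le\tfrac12|\lambda-\mu|$, and multiplying the two bounds produces the asserted constant $\tfrac14|\alpha-\beta|\,|\lambda-\mu|$.

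The main obstacle I anticipate is the algebraic bookkeeping in the Gr\"{u}ss identity: the noncommutativity of $A$ forces one to track the order of factors carefully, and the identity genuinely hinges on $ep=e$ to collapse the term $\langle x,e\rangle\,p\,\langle e,y\rangle$. A secondary point is the passage from the $C^{*}$-order inequality $\langle \tilde{x},\tilde{x}\rangle\le\langle z,z\rangle$ to the norm inequality $\|\tilde{x}\|\le\|z\|$, which is justified by the monotonicity of the norm on positive elements of $A$.
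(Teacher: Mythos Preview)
The paper does not actually prove Theorem~\ref{t2}; it merely quotes the statement from \cite{gha} and uses it as a tool in the subsequent lemma. So there is no ``paper's own proof'' to compare against.

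That said, your argument is correct and is essentially the standard route to this result in the Hilbert $C^{*}$-module setting: reduce to the residual form $\tilde x=x-e\langle e,x\rangle$, verify the Gr\"uss identity $\langle x,y\rangle-\langle x,e\rangle\langle e,y\rangle=\langle\tilde x,\tilde y\rangle$, apply the module Schwarz inequality~\eqref{2.1}, and then use the translation invariance of the residual to bound $\|\tilde x\|$ by $\bigl\|x-\tfrac{\alpha+\beta}{2}e\bigr\|$. The observation $ep=e$ (forced by $\langle ep-e,ep-e\rangle=0$ and definiteness) is exactly the hinge that makes the identity work in the noncommutative case, and you have identified it correctly. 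One small quibble: the positivity of the subtracted term $\langle z,e\rangle\langle e,z\rangle=\langle e,z\rangle^{*}\langle e,z\rangle$ is a fact about $C^{*}$-algebras ($a^{*}a\ge0$), not property~(iv) of the module; but the conclusion is unaffected.
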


\begin{example}
Let $A$ be a real $C^*$-algebra and $X$ be a semi-inner product $C^{*}$-module on a $C^{*}$-algebra $B$.
If functions $f, g \in D_{p}(A ,X)$,
then function $k: A \rightarrow B$ as $k(a)=\langle f(a) , g(a)\rangle$ is differentiable
in $(p\in A)$ and derivative of that is a linear mapping $Dk(p): A \rightarrow B$ defined by
\begin{equation*}
 Dk(p)(a) = \langle Df(p)(a), g(p)\rangle + \langle f(p) , Dg(p)(a) \rangle.
\end{equation*}
Because
\begin{align*}
&\langle f(p+h), g(p+h)\rangle-\langle f(p), g(p)\rangle-\langle Df(p)(h), g(p)\rangle-\langle f(p), Dg(p)(h)\rangle\\
&=\langle f(p+h), g(p+h)-g(p)-Dg(p)(h))\rangle+\langle f(p+h)-f(p), Dg(p)(h)\rangle\\
&+\langle f(p+h)-f(p)-Df(p)(h), g(p)\rangle.
\end{align*}
\end{example}

Let $A$ be a $C^*$-algebra and $X$ a semi-inner product $A$-module. If $f\in D_p(A,X)$ and $a\in A$, we define
the function $f_a: A\rightarrow X$ by $f_{a}(t)=f(t)a$.

\begin{theorem}
Let $X$ be a semi-inner product $C^{*}$-module on $C^{*}$-algebra $A$, and $p\in A, e\in X$.
If $\langle e,e\rangle$ is an idempotent element in $A$, and $f, g \in D_{p}(A,X)$,
then for every $a\in A$, the map $[\cdot,\cdot]_a: D_{p}(A,X) \times D_{p}(A,X) \rightarrow A$ with;
\begin{equation*}
[f,g]_a:=  \big\langle D f(p)(a), Dg(p)(a)\big\rangle_{1} +  \big\langle f(p), g(p)\big\rangle_{1} -D\big\langle f(\cdot), g(\cdot)\big\rangle_{1}(p)(a),
\end{equation*}
is a generalized semi-inner product on $D_{p}(A,X)$, where
 $$\langle f(a),g(a)\rangle_1=\langle f(a),g(a)\rangle-\langle f(a),e\rangle\langle e,g(a)\rangle.$$
\end{theorem}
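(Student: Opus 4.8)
The plan is to reduce the three–term expression defining $[f,g]_a$ to a single ``inner product of differences,'' after which axioms (i)--(iv) become almost immediate. Concretely, I would first record that the auxiliary pairing
\[
\langle x,y\rangle_1=\langle x,y\rangle-\langle x,e\rangle\langle e,y\rangle\qquad(x,y\in X)
\]
is itself an $A$-valued semi-inner product on $X$, and then introduce the map $\Phi_a\colon D_p(A,X)\to X$, $\Phi_a(f)=Df(p)(a)-f(p)$, aiming to prove the identity $[f,g]_a=\langle \Phi_a(f),\Phi_a(g)\rangle_1$.

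For the first point, additivity in each slot, the right $A$-linearity $\langle x,yb\rangle_1=\langle x,y\rangle_1 b$, and the symmetry $\langle x,y\rangle_1^{*}=\langle y,x\rangle_1$ follow directly from the corresponding properties of $\langle\cdot,\cdot\rangle$ and use no hypothesis on $e$. The substantive claim is positivity, $\langle x,x\rangle_1=\langle x,x\rangle-\langle x,e\rangle\langle e,x\rangle\ge 0$, and this is exactly where the assumption that $q:=\langle e,e\rangle$ is idempotent (hence, being self-adjoint, a projection) is essential. I would obtain it from the positivity of the Gram-type element: expanding $0\le\langle x-e\langle e,x\rangle,\,x-e\langle e,x\rangle\rangle$ and using $\langle yc,z\rangle=c^{*}\langle y,z\rangle$ gives $\langle x,x\rangle+\langle x,e\rangle q\langle e,x\rangle\ge 2\langle x,e\rangle\langle e,x\rangle$; since $1-q$ is positive, $\langle x,e\rangle(1-q)\langle e,x\rangle\ge 0$, and combining the two inequalities yields $\langle x,x\rangle\ge\langle x,e\rangle\langle e,x\rangle$. (A scalar example with $A=X=\mathbb{C}$ shows the inequality fails when $\langle e,e\rangle>1$, confirming that idempotency cannot be dropped.)

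For the identity, I would apply the Leibniz formula of the Example to the continuous, bilaterally additive pairing $\langle\cdot,\cdot\rangle_1$: the function $t\mapsto\langle f(t),g(t)\rangle_1$ lies in $D_p(A,A)$ and
\[
D\langle f(\cdot),g(\cdot)\rangle_1(p)(a)=\langle Df(p)(a),g(p)\rangle_1+\langle f(p),Dg(p)(a)\rangle_1 .
\]
Substituting this into the definition of $[f,g]_a$ and regrouping the four resulting terms collapses the expression to $\langle Df(p)(a)-f(p),\,Dg(p)(a)-g(p)\rangle_1=\langle\Phi_a(f),\Phi_a(g)\rangle_1$. With this factorization the verification finishes quickly: $\Phi_a$ is additive because differentiation and evaluation at $p$ are additive, and $\Phi_a(g_b)=\Phi_a(g)b$ because $g_b=R_b\circ g$ with $R_b(x)=xb$ bounded linear forces $D(g_b)(p)(a)=Dg(p)(a)\,b$ by the chain rule. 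Hence (i), (ii), (iii) transfer at once from the additivity, right $A$-linearity and symmetry of $\langle\cdot,\cdot\rangle_1$, while (iv), $[f,f]_a=\langle\Phi_a(f),\Phi_a(f)\rangle_1\ge 0$, is precisely the positivity established above.

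The main obstacle is the positivity step for $\langle\cdot,\cdot\rangle_1$; everything after the factorization is bookkeeping. I would also take care that the product-rule formula from the Example is legitimately applicable to $\langle\cdot,\cdot\rangle_1$ (it uses only joint continuity and bilateral additivity, both inherited from $\langle\cdot,\cdot\rangle$), and that the subtracted term $D\langle f(\cdot),g(\cdot)\rangle_1(p)(a)$ is well defined, which is part of what the Example already guarantees.
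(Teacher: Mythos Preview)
Your argument is correct and matches the paper's: both collapse $[f,g]_a$ via the product rule to $\langle Df(p)(a)-f(p),\,Dg(p)(a)-g(p)\rangle_1$ and then establish positivity through the element $x-e\langle e,x\rangle$. The only cosmetic difference is that the paper pushes the calculation one step further to the identity $\langle u,v\rangle_1=\langle u-e\langle e,u\rangle,\,v-e\langle e,v\rangle\rangle$, reading off positivity directly from that of $\langle\cdot,\cdot\rangle$, whereas you obtain $\langle x,x\rangle_1\ge 0$ by combining the same expansion with $\langle x,e\rangle(1-q)\langle e,x\rangle\ge 0$.
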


\begin{proof}
First, we show that $f_a\in D_p(A,X)$ and $Df_{a}(p)=(Df(p))a$.
There exists a bounded convex set $V(=B(p,r))$ containing $p$ such that $V\subseteq U$.
 Let $ p, h\in V, a\in A $, then
\begin{align*}
\Vert f_{a}(p+h)- f_{a}(p)- (Df(p)(h))a\Vert&=\|[f(p+h)- f(p)- Df(p)(h)]a\|\\
&\leq \|f(p+h)- f(p)- Df(p)(h)\|\|a\|.
\end{align*}
This implies that $f_a\in D_p(A,X)$.

A simple calculation shows
\begin{multline*}
[f,g]_a= \Big\langle  Df(p)(a)-f(p) ,Dg(p)(a)-g(p) \rangle \\
- \langle  Df(p)(a)-f(p) ,e\rangle   \langle  e ,Dg(p)(a)-g(p)\Big\rangle\\
=\Big \langle( Df(p)(a)-f(p))-e\big\langle  e, ( Df(p)(a)-f(p))\big\rangle  \\
 , (Dg(p)(a)-g(p))-e \big\langle  e,(Dg(p)(a)-g(p))\big\rangle\Big\rangle.
\end{multline*}
Therefore,
\begin{multline*}
[f,f]_a=\Big \langle( Df(p)(a)-f(p))-e\big\langle  e, ( Df(p)(a)-f(p))\big\rangle  \\
 , (Df(p)(a)-f(p))-e \big\langle  e,(Df(p)(a)-f(p))\big\rangle\Big\rangle\geq0.
\end{multline*}
It is easy to show that $[\cdot , \cdot]_a $ is a generalized semi-inner product on $ D_{p}(A,X)  $.
\end{proof}

\begin{lemma}
Let $X$ be a semi-inner product $C^{*}$-module on $C^{*}$-algebra $A$, and $p,a\in A, e\in X$.
If $\langle e,e\rangle$ is an idempotent element in $A$, $f, g \in D_{p}(A,X)$, and \\
$\alpha, \beta, \alpha', \beta', \mu, \lambda, \mu', \lambda'$ are complex numbers such that
\begin{align*}
&\left\Vert f(p)- \frac{\alpha + \beta}{2}e\right\Vert \leq \frac{1}{2}\vert \alpha - \beta \vert \\
&\left\Vert Df(p)(a)- \frac{\alpha' + \beta'}{2}e\right\Vert \leq \frac{1}{2}\vert \alpha' - \beta' \vert \\
&\left\Vert g(p)- \frac{\lambda + \mu}{2}e\right\Vert\leq \frac{1}{2}\vert \lambda - \mu \vert\\
&\left\Vert Dg(p)(a)- \frac{\mu' + \lambda'}{2}e\right\Vert \leq \frac{1}{2}\vert \mu' - \lambda' \vert,
\end{align*}
then the following inequality holds
\begin{multline*}
\Vert \langle D f(p)(a), Dg(p)(a)\rangle_{1} +  \langle f(p), g(p)\rangle_{1} -D\big\langle f(\cdot), g(\cdot)\big\rangle_{1}(p)(a) \Vert \\
\leq \frac{1}{2}( \vert \alpha - \beta \vert+\vert \alpha' - \beta' \vert)(\vert \lambda - \mu \vert +\vert \lambda' - \mu' \vert ).\\ \\
\end{multline*}
\end{lemma}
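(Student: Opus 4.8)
The plan is to reduce the asserted estimate to the $C^*$-module Gr\"{u}ss inequality of Theorem~\ref{t2}. In the proof of the preceding theorem it was already shown that, on writing $u := Df(p)(a)-f(p)$ and $v := Dg(p)(a)-g(p)$, the quantity to be estimated is exactly
\[
[f,g]_a = \langle u, v\rangle - \langle u, e\rangle\langle e, v\rangle ,
\]
that is, the Gr\"{u}ss functional appearing in Theorem~\ref{t2} evaluated at the single pair $(u,v)$. Thus it suffices to exhibit, for $u$ and for $v$, complex scalars playing the roles of $\alpha,\beta$ and $\lambda,\mu$ in that theorem, and then to invoke it once.

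First I would localise $u$ and $v$ in balls centred at multiples of $e$. Putting $c := \tfrac{\alpha'+\beta'}{2}-\tfrac{\alpha+\beta}{2}$ and combining the first two hypotheses with the triangle inequality gives
\[
\left\| u - c\,e\right\|
= \left\| \Big(Df(p)(a)-\tfrac{\alpha'+\beta'}{2}e\Big) - \Big(f(p)-\tfrac{\alpha+\beta}{2}e\Big)\right\|
\leq \tfrac{1}{2}\big(|\alpha-\beta|+|\alpha'-\beta'|\big) =: R .
\]
To match the exact form required by Theorem~\ref{t2}, I would then set $A := c+R$ and $B := c-R$, so that $\tfrac{A+B}{2}=c$ and $\tfrac{1}{2}|A-B|=R$; hence $\left\| u-\tfrac{A+B}{2}e\right\|\leq \tfrac{1}{2}|A-B|$ with $|A-B|=|\alpha-\beta|+|\alpha'-\beta'|$. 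The identical argument applied to $v$, with centre $c' := \tfrac{\mu'+\lambda'}{2}-\tfrac{\lambda+\mu}{2}$ and radius $R' := \tfrac{1}{2}(|\lambda-\mu|+|\mu'-\lambda'|)$, yields scalars $C,D$ with $|C-D|=|\lambda-\mu|+|\mu'-\lambda'|$ and $\left\| v-\tfrac{C+D}{2}e\right\|\leq \tfrac{1}{2}|C-D|$.

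Since $\langle e,e\rangle$ is idempotent, Theorem~\ref{t2} applied to the pair $(u,v)$ with scalars $A,B,C,D$ then gives
\[
\left\| [f,g]_a\right\| = \left\| \langle u,v\rangle - \langle u,e\rangle\langle e,v\rangle\right\|
\leq \tfrac{1}{4}\,|A-B|\,|C-D|
= \tfrac{1}{4}\big(|\alpha-\beta|+|\alpha'-\beta'|\big)\big(|\lambda-\mu|+|\mu'-\lambda'|\big),
\]
which is in fact sharper than the asserted inequality (a factor $\tfrac14$ rather than $\tfrac12$), and a fortiori establishes the stated bound. Two points deserve attention. The first is merely cosmetic: the passage from the ``centre--radius'' description of the balls to the ``$\tfrac{\alpha+\beta}{2}$, $\tfrac12|\alpha-\beta|$'' form demanded by Theorem~\ref{t2} is always possible via $A=c+R$, $B=c-R$, so nothing is lost there. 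The only genuine obstacle is that Theorem~\ref{t2} is stated for a Hilbert $C^*$-module, whereas here $X$ is only a semi-inner product module; I would therefore verify that its proof uses nothing beyond the Schwarz inequality~\eqref{2.1}, the idempotency of $\langle e,e\rangle$, and the algebraic identity $\langle x,y\rangle-\langle x,e\rangle\langle e,y\rangle=\langle x-e\langle e,x\rangle,\,y-e\langle e,y\rangle\rangle$, all of which remain valid without completeness or definiteness. Granting this transfer, the displayed estimate completes the proof.
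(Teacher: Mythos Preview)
Your argument is correct and in fact takes a shorter route than the paper's. The paper does \emph{not} apply Theorem~\ref{t2} directly to the single pair $(u,v)=(Df(p)(a)-f(p),\,Dg(p)(a)-g(p))$. Instead it exploits the semi-inner product structure of $[\cdot,\cdot]_a$: it first invokes the Schwarz inequality $\|[f,g]_a\|^2\le\|[f,f]_a\|\,\|[g,g]_a\|$, then expands each of $[f,f]_a$ and $[g,g]_a$ into four Gr\"uss-type terms and bounds every one of them individually by Theorem~\ref{t2}, obtaining $\|[f,f]_a\|\le\tfrac14(|\alpha-\beta|+|\alpha'-\beta'|)^2$ and the analogous bound for $g$. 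Your approach collapses all of this into one application of Theorem~\ref{t2} by first localising $u$ and $v$ via the triangle inequality; this is certainly more efficient, and it makes transparent why the constant can be taken as $\tfrac14$ rather than the stated $\tfrac12$ (the paper's own computation also yields $\tfrac14$ once the final square root is taken, so the $\tfrac12$ in the statement appears to be slack in either approach). What the paper's route buys is a demonstration that the abstract Schwarz inequality for the new semi-inner product $[\cdot,\cdot]_a$ already suffices, which is perhaps the conceptual point being made; your route buys brevity and a clearer provenance of the constant. Your closing remark about Theorem~\ref{t2} being stated for Hilbert modules while only a semi-inner product module is assumed here applies equally to the paper's own proof, so no additional justification is needed beyond what the paper itself implicitly relies on.
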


\begin{proof}
Since $[\cdot , \cdot]_a $ is a generalized semi-inner product on $ D_{p}(A,X)  $, the Schwartz inequality holds, i.e,\\ \\
\begin{equation*}
\Vert [f,g]_a\Vert^{2} \leq \Vert [f,f]_a\Vert ~~ \Vert [g,g]_a\Vert.
\end{equation*}\\ \\
We know that
\begin{multline*}
\Vert [f , f]_a\Vert \leq \big \Vert  \langle Df(p)(a) , Df(p)(a)\rangle -\langle Df(p)(a) , e \rangle \langle e , Df(p)(a) \rangle \big \Vert
\\+ \big \Vert \langle f(p) , f(p)\rangle -\langle f(p) , e \rangle \langle e , f(p) \rangle \big \Vert
\\ +\big \Vert \langle Df(p)(a) , f(p)\rangle -\langle Df(p)(a) , e \rangle \langle e , f(p) \rangle \big \Vert \\
 +\big \Vert \langle f(p) , Df(p)(a)\rangle -\langle f(p) , e \rangle \langle e , Df(p)(a) \rangle \big \Vert.
\end{multline*}
This inequality and Theorem \ref{t2} imply that
\begin{multline*}
\Vert [f , f]_a\Vert \leq \frac{1}{4} \vert \alpha' - \beta' \vert^{2} + \frac{1}{4}\vert \alpha - \beta \vert^{2}
+ \frac{1}{2} \vert \alpha' - \beta' \vert \vert \alpha - \beta \vert\\
=\frac{1}{4}( \vert \alpha - \beta \vert+\vert \alpha' - \beta' \vert)^{2}.
\end{multline*}
Similarly
\begin{multline*}
\Vert [g , g]_a\Vert\leq \frac{1}{4} \vert \lambda' - \mu' \vert^{2} + \frac{1}{4}\vert \lambda- \mu \vert^{2}
+ \frac{1}{2} \vert \lambda' - \mu' \vert \vert \lambda - \mu \vert \\
=\frac{1}{4}( \vert \lambda - \mu \vert+\vert \lambda' - \mu' \vert)^{2}.
\end{multline*}
\end{proof}

Let $X$ be a semi-inner product $C^{\ast}$-module over $C^*$-algebra $A$. For every $x\in X$,
we define the map $\hat{x}: A\rightarrow X^n$ by $\hat{x}(a)=(xa,...,xa),~~~~(a\in A)$.
\begin{lemma}
Let $X$ be a semi-inner product $C^{\ast}$-module, $x_0,y_0, x_1,y_1\in X$ and $(r_{1},r_{2},....,r_{n})\in \mathbb{R}^n$ a probability vector.
If $p\in A$ and  $f=(f_1,...,f_n), g=(g_1,...,g_n)\in D_{p}(A,X^n)$ such that
\begin{equation*}
\left\|Df(p)-\widehat{\frac{x_0+y_0}{2}}\right\|\leq \left\|\frac{x_0-y_0}{2}\right\|,
\end{equation*}
and
\begin{equation*}
\left\|Dg(p)-\widehat{\frac{x_1+y_1}{2}}\right\|\leq \left\|\frac{x_1-y_1}{2}\right\|,
\end{equation*}
then for all $a\in A$, we have

\begin{multline}
\left\|\sum_{i=1}^{n}r_{i}\left\langle Df_{i}(p)(a), Dg_{i}(p)(a)\right\rangle
-\Big\langle \sum_{i=1}^{n}r_{i}Df_{i}(p)(a) ,\sum_{i=1}^{n}r_{i}Dg_{i}(p)(a)\Big\rangle\right\|\\
\leq \frac{1}{4}\|x_0-y_0\|\|x_1-y_1\|\|a\|^2.
\end{multline}
\end{lemma}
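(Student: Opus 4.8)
The plan is to recast the left-hand side as a covariance form on $X^n$ and then apply the module Schwarz inequality \eqref{2.1} in its sharp form. Write $u_i=Df_i(p)(a)$, $v_i=Dg_i(p)(a)$, $\bar u=\sum_{i=1}^n r_iu_i$, $\bar v=\sum_{i=1}^n r_iv_i$, and set $U=\frac{x_0+y_0}{2}a$, $V=\frac{x_1+y_1}{2}a$. The quantity inside the norm on the left is exactly $\sum_i r_i\langle u_i,v_i\rangle-\langle\bar u,\bar v\rangle$, and since the $r_i$ are real with $\sum_i r_i=1$ a routine expansion gives the covariance identity $\sum_i r_i\langle u_i,v_i\rangle-\langle\bar u,\bar v\rangle=\sum_i r_i\langle u_i-\bar u,\,v_i-\bar v\rangle$. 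First I would extract pointwise bounds from the two hypotheses. Equipping $X^n$ with the direct-sum semi-inner product $\langle(\xi_i),(\eta_i)\rangle=\sum_i\langle\xi_i,\eta_i\rangle$, each coordinate norm is dominated by the $X^n$-norm, because $\langle\xi_j,\xi_j\rangle\le\sum_i\langle\xi_i,\xi_i\rangle$ in $A_+$ and the $C^*$-norm is monotone on positive elements. Evaluating the bounded linear map $Df(p)-\widehat{\frac{x_0+y_0}{2}}$ at $a$ and using the operator-norm hypothesis therefore yields $\|u_i-U\|\le\frac12\|x_0-y_0\|\,\|a\|$, and likewise $\|v_i-V\|\le\frac12\|x_1-y_1\|\,\|a\|$, for every $i$.

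Next I would introduce on $X^n$ the $A$-valued form $B(\mathbf u,\mathbf v)=\sum_i r_i\langle u_i,v_i\rangle$; one checks immediately that it satisfies the axioms (i)--(iv), so $(X^n,B)$ is a semi-inner product $A$-module and the Schwarz inequality \eqref{2.1} applies to it, giving $\|B(\mathbf u,\mathbf v)\|^2\le\|B(\mathbf u,\mathbf u)\|\,\|B(\mathbf v,\mathbf v)\|$ via $B(\mathbf v,\mathbf u)=B(\mathbf u,\mathbf v)^{*}$. Writing $\bar{\mathbf u}=(\bar u,\dots,\bar u)$ and $\bar{\mathbf v}=(\bar v,\dots,\bar v)$, the covariance identity says the target expression equals $B(\mathbf u-\bar{\mathbf u},\,\mathbf v-\bar{\mathbf v})$, whence
\[
\Big\|\sum_i r_i\langle u_i,v_i\rangle-\langle\bar u,\bar v\rangle\Big\|^2\le \big\|B(\mathbf u-\bar{\mathbf u},\mathbf u-\bar{\mathbf u})\big\|\;\big\|B(\mathbf v-\bar{\mathbf v},\mathbf v-\bar{\mathbf v})\big\|.
\]
This step is exactly what produces the sharp constant $\frac14$ rather than the $\frac12$ that a direct triangle-inequality estimate of $\sum_i r_i\|u_i-\bar u\|\,\|v_i-\bar v\|$ would give.

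It then remains to bound the two ``variances.'' Using $\sum_i r_i(u_i-\bar u)=0$, for any fixed $c\in X$ one has $\sum_i r_i\langle u_i-c,u_i-c\rangle=B(\mathbf u-\bar{\mathbf u},\mathbf u-\bar{\mathbf u})+\langle\bar u-c,\bar u-c\rangle$, so $0\le B(\mathbf u-\bar{\mathbf u},\mathbf u-\bar{\mathbf u})\le\sum_i r_i\langle u_i-c,u_i-c\rangle$. Choosing $c=U$ and using monotonicity of the norm on positive elements together with the triangle inequality, $\|B(\mathbf u-\bar{\mathbf u},\mathbf u-\bar{\mathbf u})\|\le\sum_i r_i\|u_i-U\|^2\le\frac14\|x_0-y_0\|^2\|a\|^2$, and symmetrically $\|B(\mathbf v-\bar{\mathbf v},\mathbf v-\bar{\mathbf v})\|\le\frac14\|x_1-y_1\|^2\|a\|^2$. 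Substituting into the displayed Schwarz bound and taking square roots yields the asserted inequality.

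The main obstacle, and the key idea, is recognizing that the correct object is the covariance form $B$ on $X^n$ evaluated at the \emph{fluctuation} vectors $\mathbf u-\bar{\mathbf u}$ and $\mathbf v-\bar{\mathbf v}$: it is the module Schwarz inequality applied to these fluctuations, rather than a pointwise triangle estimate, that halves the constant to $\frac14$. The variance bound in turn hinges on the fact that the weighted mean $\bar u$ minimizes $c\mapsto\sum_i r_i\langle u_i-c,u_i-c\rangle$ in the positive-element order, which lets us replace $\bar u$ by the explicit center $U$ and invoke the hypotheses coordinatewise.
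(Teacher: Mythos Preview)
Your proof is correct and follows essentially the same route as the paper: both recognize the target expression as a covariance-type semi-inner product (the paper verifies positivity via the Korkine identity $\tfrac12\sum_{i,j}r_ir_j\langle u_i-u_j,v_i-v_j\rangle$, you via $\sum_i r_i\langle u_i-\bar u,v_i-\bar v\rangle$), apply the module Schwarz inequality \eqref{2.1}, and then bound each ``variance'' by shifting to the explicit center $U$ (resp.\ $V$) before invoking the coordinatewise hypotheses. The only differences are cosmetic---the paper places the semi-inner product on $D_p(A,X^n)$ rather than on $X^n$, and gets the variance bound by translation invariance of the covariance instead of your minimization argument---but the computations coincide line for line.
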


\begin{proof}
For every $a\in A$, we define the map $\big(\cdot,\cdot\big)_a: D_{p}(A,X^n) \times D_{p}(A,X^n) \rightarrow A$ with;
\begin{multline*}
\big(f,g\big)_a= \sum_{i=1}^{n}r_{i}\Big\langle Df_{i}(p)(a), Dg_{i}(p)(a)\Big\rangle
-\Big\langle \sum_{i=1}^{n}r_{i}Df_{i}(p)(a) ,\sum_{i=1}^{n}r_{i}Dg_{i}(p)(a)\Big\rangle.
\end{multline*}
The following Korkine type inequality for differentiable mappings holds:
\begin{equation*}
\big(f,g\big)_a= \frac{1}{2}\sum_{i=1,j=1}^{n}r_{i}r_{j}\Big\langle Df_{i}(p)(a)-Df_{j}(p)(a),Dg_{i}(p)(a)-Dg_{j}(p)(a)\Big\rangle,
\end{equation*}
Therefore, $\big(f,f\big)_a\geq 0$. It is easy to show that $\big(\cdot,\cdot\big)_a$ is a generalized semi-inner product on $ D_{p}(A,X^n)$.

A simple calculation shows that
\begin{multline*}
\big(f,g\big)_a= \sum_{i=1}^{n}r_{i}\Big\langle Df_{i}(p)(a)-\frac{x_0+y_0}{2}a,~ Dg_{i}(p)(a)-\frac{x_1+y_1}{2}a\Big\rangle\\
-\Big\langle \sum_{i=1}^{n}r_{i}Df_{i}(p)(a)-\frac{x_0+y_0}{2}a ,~\sum_{i=1}^{n}r_{i}Dg_{i}(p)(a)-\frac{x_1+y_1}{2}a\Big\rangle.
\end{multline*}

From Schwartz inequality, we have
\begin{align*}
\|\big(f,g\big)_a\|^2&\leq\sum_{i=1}^{n}r_{i}\left\| Df_{i}(p)(a)-\frac{x_0+y_0}{2}a\right\|^2\sum_{i=1}^{n}r_{i}\left\| Dg_{i}(p)(a)-\frac{x_1+y_1}{2}a\right\|^2\\
&\leq \left\|Df(p)-\widehat{\frac{x_0+y_0}{2}}\right\|^2\left\|Dg(p)-\widehat{\frac{x_1+y_1}{2}}\right\|^2\|a\|^4\\
&\leq \frac{1}{16}\|x_0-y_0\|^2\|x_1-y_1\|^2\|a\|^4
\end{align*}
\end{proof}

\begin{corollary}
Let $X$ be a semi-inner product $C^{\ast}$-module, $x_0,y_0\in X, (\alpha_1, . . . , \alpha_n)
 \in \mathbb{C}^n$ and $(r_{1},r_{2},....,r_{n})\in \mathbb{R}^n$ a probability vector. If $p\in A$ and $f=(f_1,...,f_n)\in D_{p}(A,X^n)$ such that
\begin{equation*}
\left\|Df(p)-\widehat{\frac{x_0+y_0}{2}}\right\|\leq \left\|\frac{x_0-y_0}{2}\right\|,
\end{equation*}
then for all $a\in A$, we have
\begin{align}\label{4.2}
&\left\|\sum_{i=1}^nr_i\alpha_iDf_{i}(p)(a)-\sum_{i=1}^n r_i \alpha_i \sum_{i=1}^n r_iDf_{i}(p)(a)\right\|\notag\\
&\leq \|a\|\left\|\frac{x_0-y_0}{2}\right\|\left[\sum_{i=1}^nr_i|\alpha_i|^2-\left|\sum_{i=1}^nr_i\alpha_i\right|^2\right]^{\frac{1}{2}}.
\end{align}
\end{corollary}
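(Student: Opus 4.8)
The plan is to prove \eqref{4.2} as the module-seminorm analogue of the discrete Gr\"uss inequality \eqref{1.2}--\eqref{1.4}, following the same scheme; the only genuinely new ingredient is turning the operator hypothesis on $Df(p)$ into a uniform bound on each coordinate $Df_i(p)(a)$. Write $\sigma:=\sum_{j=1}^{n}r_j\alpha_j$. First I would record the Korkine/Sonin-type reduction
$$\sum_{i=1}^{n}r_i\alpha_iDf_i(p)(a)-\sigma\sum_{i=1}^{n}r_iDf_i(p)(a)=\sum_{i=1}^{n}r_i(\alpha_i-\sigma)Df_i(p)(a),$$
and then, since $\sum_{i=1}^{n}r_i(\alpha_i-\sigma)=0$, subtract the fixed vector $\tfrac{x_0+y_0}{2}a$ inside each summand without altering the value:
$$\sum_{i=1}^{n}r_i\alpha_iDf_i(p)(a)-\sigma\sum_{i=1}^{n}r_iDf_i(p)(a)=\sum_{i=1}^{n}r_i(\alpha_i-\sigma)\left(Df_i(p)(a)-\tfrac{x_0+y_0}{2}a\right).$$
This recentering is exactly what allows the hypothesis to enter the estimate.

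The main step is to convert the operator hypothesis into a coordinatewise bound. Writing $u_i:=Df_i(p)(a)-\tfrac{x_0+y_0}{2}a$, the tuple $(u_1,\dots,u_n)$ is precisely $\left(Df(p)-\widehat{\tfrac{x_0+y_0}{2}}\right)(a)$, so
$$\left\|(u_1,\dots,u_n)\right\|_{X^n}\leq\left\|Df(p)-\widehat{\tfrac{x_0+y_0}{2}}\right\|\,\|a\|\leq\left\|\tfrac{x_0-y_0}{2}\right\|\,\|a\|.$$
Since the $A$-valued inner product on $X^n$ satisfies $\langle(u_1,\dots,u_n),(u_1,\dots,u_n)\rangle=\sum_{j=1}^{n}\langle u_j,u_j\rangle\geq\langle u_i,u_i\rangle\geq0$ in $A$, monotonicity of the $C^*$-norm on positive elements gives $\|u_i\|^2=\|\langle u_i,u_i\rangle\|\leq\big\|\sum_{j}\langle u_j,u_j\rangle\big\|=\|(u_1,\dots,u_n)\|_{X^n}^{2}$, whence the uniform estimate $\|u_i\|\leq\left\|\tfrac{x_0-y_0}{2}\right\|\|a\|$ for every $i$.

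Finally I would apply the triangle inequality to the recentered sum, factor out this uniform bound, and close with the scalar Cauchy--Schwarz inequality against the weights $r_i$:
$$\left\|\sum_{i=1}^{n}r_i(\alpha_i-\sigma)u_i\right\|\leq\sum_{i=1}^{n}r_i|\alpha_i-\sigma|\,\|u_i\|\leq\left\|\tfrac{x_0-y_0}{2}\right\|\|a\|\sum_{i=1}^{n}r_i|\alpha_i-\sigma|,$$
and, because $\sum_{i=1}^{n}r_i=1$, $\sum_{i=1}^{n}r_i|\alpha_i-\sigma|\leq\big(\sum_{i=1}^{n}r_i|\alpha_i-\sigma|^2\big)^{1/2}$. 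The variance identity $\sum_{i=1}^{n}r_i|\alpha_i-\sigma|^2=\sum_{i=1}^{n}r_i|\alpha_i|^2-|\sigma|^2=\sum_{i=1}^{n}r_i|\alpha_i|^2-\big|\sum_{i=1}^{n}r_i\alpha_i\big|^2$ then produces exactly the right-hand side of \eqref{4.2}.

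I expect the only real obstacle to be the coordinatewise reduction in the middle paragraph: one must check that the seminorm on $X^n$ dominates each coordinate seminorm, which rests on positivity of $\sum_{j}\langle u_j,u_j\rangle-\langle u_i,u_i\rangle$ together with monotonicity of the $C^*$-norm. Everything else is the classical Gr\"uss computation transported verbatim. As an alternative to the uniform bound one may instead retain $\sum_{i}r_i|\alpha_i-\sigma|\,\|u_i\|$ and split it by Cauchy--Schwarz as $\big(\sum_{i}r_i|\alpha_i-\sigma|^2\big)^{1/2}\big(\sum_{i}r_i\|u_i\|^2\big)^{1/2}$, bounding $\sum_{i}r_i\|u_i\|^2\leq\left\|\tfrac{x_0-y_0}{2}\right\|^2\|a\|^2$ exactly as in the preceding lemma; both routes yield the same constant.
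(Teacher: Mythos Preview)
Your argument is correct and follows essentially the same route as the paper: recenter by subtracting $\tfrac{x_0+y_0}{2}a$ using $\sum_i r_i(\alpha_i-\sigma)=0$, apply the triangle inequality, bound each $\|Df_i(p)(a)-\tfrac{x_0+y_0}{2}a\|$ by $\bigl\|Df(p)-\widehat{\tfrac{x_0+y_0}{2}}\bigr\|\|a\|$, and finish with scalar Cauchy--Schwarz and the variance identity. Your explicit justification of the coordinatewise bound via positivity in $X^n$ and monotonicity of the $C^*$-norm is more careful than the paper's terse presentation, but the strategy is the same.
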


\begin{proof}
\begin{align*}
&\left\|\sum_{i=1}^nr_i\alpha_iDf_{i}(p)(a) -\sum_{i=1}^n r_i \alpha_i \sum_{i=1}^n r_iDf_{i}(p)(a)\right\|\\
&=\left|\sum_{i=1}^nr_i\Big(\alpha_i-\sum_{j=1}^nr_j\alpha_j\Big)\right|\left\|Df_{i}(p)(a)-\frac{x_0+y_0}{2}.a\right\|\\
&\leq\sum_{i=1}^nr_i\left|\alpha_i-\sum_{j=1}^nr_j\alpha_j\right|\left\|Df(p)-\widehat{\frac{x_0+y_0}{2}}\right\|\|a\|\\
&\leq \|a\|\left\|\frac{x_0-y_0}{2}\right\|\left[\sum_{i=1}^nr_i|\alpha_i|^2-\left|\sum_{i=1}^nr_i\alpha_i\right|^2\right]^{\frac{1}{2}}.
\end{align*}
\end{proof}

\begin{corollary}
Let $X$ be a semi-inner product $C^*$-module, $x_0,y_0\in X$. If $p\in A$ and $f=(f_1,...,f_n)\in D_{p}(A,X^n)$ such that
\begin{equation*}
\left\|Df(p)-\widehat{\frac{x_0+y_0}{2}}\right\|\leq \left\|\frac{x_0-y_0}{2}\right\|,
\end{equation*}
then for all $a\in A$, we have
\begin{equation}\label{4.3}
\left\|\sum_{k=1}^nkDf_{k}(p)(a)-\frac{n+1}{2}.\sum_{k=1}^n Df_{k}(p)(a)\right\|\leq \frac{\|a\|\|x_0-y_0\|n}{4} \left[\frac{(n-1)(n+1)}{3}\right]^{\frac{1}{2}},
\end{equation}
and
\begin{multline}\label{4.4}
\left\|\sum_{k=1}^nk^2Df_{k}(p)(a)-\frac{(n+1)(2n+1)}{6}.\sum_{k=1}^n Df_{k}(p)(a)\right\|\\
\leq\frac{\|a\|\|x_0-y_0\|n}{12\sqrt{5}}\sqrt{(n-1)(n+1)(2n+1)(8n+11)}.
\end{multline}
\end{corollary}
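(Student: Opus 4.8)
The plan is to obtain both displayed bounds as specializations of the preceding Corollary (inequality~\eqref{4.2}), taking in each case the uniform probability vector $r_i=\tfrac{1}{n}$ together with a well-chosen scalar sequence $(\alpha_i)$. For \eqref{4.3} I would set $\alpha_i=i$, and for \eqref{4.4} I would set $\alpha_i=i^2$. With uniform weights one has $\sum_{i=1}^n r_i\alpha_i\,Df_i(p)(a)=\tfrac{1}{n}\sum_{i=1}^n \alpha_i\,Df_i(p)(a)$ and $\sum_{i=1}^n r_i\,Df_i(p)(a)=\tfrac{1}{n}\sum_{i=1}^n Df_i(p)(a)$, so the left-hand side of \eqref{4.2} equals $\tfrac{1}{n}$ times the norm appearing on the left of \eqref{4.3} (resp.\ \eqref{4.4}); multiplying the final estimate through by $n$ then puts it in the stated form. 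All the structural work---the positivity of $(\cdot,\cdot)_a$ and the Schwarz inequality behind \eqref{4.2}---is already done, so only moment computations remain.

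The right-hand side of \eqref{4.2} is controlled by the variance $\sum_{i=1}^n r_i|\alpha_i|^2-\big|\sum_{i=1}^n r_i\alpha_i\big|^2$, which I would evaluate with the classical power-sum identities
\[
\sum_{i=1}^n i=\frac{n(n+1)}{2},\qquad \sum_{i=1}^n i^2=\frac{n(n+1)(2n+1)}{6},\qquad \sum_{i=1}^n i^4=\frac{n(n+1)(2n+1)(3n^2+3n-1)}{30}.
\]
For $\alpha_i=i$ the variance becomes $\frac{(n+1)(2n+1)}{6}-\frac{(n+1)^2}{4}=\frac{(n-1)(n+1)}{12}$, whose square root is $\frac12\big[\frac{(n-1)(n+1)}{3}\big]^{1/2}$. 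Substituting into \eqref{4.2}, using $\|\tfrac{x_0-y_0}{2}\|=\tfrac12\|x_0-y_0\|$, and multiplying by $n$ delivers \eqref{4.3} exactly.

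For $\alpha_i=i^2$ the variance equals $\frac{(n+1)(2n+1)(3n^2+3n-1)}{30}-\frac{(n+1)^2(2n+1)^2}{36}$. I would factor out $(n+1)(2n+1)$ and clear denominators over $180$, which leaves the quadratic $8n^2+3n-11$; the one genuinely non-mechanical step is recognizing the factorization $8n^2+3n-11=(n-1)(8n+11)$ (its roots are $n=1$ and $n=-\tfrac{11}{8}$). The variance is then $\frac{(n-1)(n+1)(2n+1)(8n+11)}{180}$, and since $\sqrt{180}=6\sqrt5$ its square root is $\frac{1}{6\sqrt5}\sqrt{(n-1)(n+1)(2n+1)(8n+11)}$. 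Inserting this into \eqref{4.2} and multiplying by $n$ yields \eqref{4.4}. The main obstacle is therefore purely arithmetic, concentrated in the sum-of-fourth-powers evaluation and the quadratic factorization; everything else is bookkeeping inherited from the earlier Corollary.
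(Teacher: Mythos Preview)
Your proposal is correct and follows exactly the paper's own approach: the paper's proof simply states that one substitutes $r_i=\tfrac{1}{n}$ with $\alpha_i=i$ (resp.\ $\alpha_i=i^2$) into inequality~\eqref{4.2} to obtain \eqref{4.3} (resp.\ \eqref{4.4}). You have additionally supplied the omitted variance computations and verified the constants, which is more than the paper itself does.
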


\begin{proof}
If we put $ri=\frac{1}{n}, \alpha_i=k$ in inequality \eqref{4.2}, then we get \eqref{4.3},
and if $ri=\frac{1}{n}, \alpha_i=k^2$ in inequality \eqref{4.2}, then we get \eqref{4.4}.
\end{proof}

\section{Differential mapping on Banach *-algebras}

\begin{theorem}
Let $A,B$ be two Banach $*$-algebras and $p$ be a self adjoint element in $A$.
Then $D_p(A,B)$ is a Banach $*$-algebra with the point-wise operations and the involution
$f^*(a)=(f(a^*))^*~~(a\in A)$, and the norm
\begin{equation}\label{3.1}
\|f\|:=\max\left\{\sup_{x\in U}\|Df(x)\|,~\sup_{a\in A}\|f(a)\|\right\}< \infty.
\end{equation}

\end{theorem}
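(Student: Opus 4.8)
The plan is to verify, in turn, that $D_p(A,B)$ is closed under the pointwise operations and under the proposed involution, that the quantity in \eqref{3.1} is a genuine complete algebra norm, and that the involution is isometric and compatible with the product; together these yield the Banach $*$-algebra structure. Closure under addition and scalar multiplication is immediate from the linearity of the Fréchet derivative, so the first substantive point is the Leibniz rule.

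Since multiplication $B\times B\to B$, $(u,v)\mapsto uv$, is a bounded bilinear map, composing it with the differentiable map $x\mapsto(f(x),g(x))$ shows that for $f,g\in D_p(A,B)$ the pointwise product $fg$ is again continuous and Fréchet differentiable on the common neighborhood, with
\[
D(fg)(x)(h)=Df(x)(h)\,g(x)+f(x)\,Dg(x)(h),
\]
so $fg\in D_p(A,B)$. For the involution, the hypothesis $p^{*}=p$ is exactly what is needed: $a\mapsto a^{*}$ is a conjugate-linear isometry of $A$ fixing $p$, hence it carries a neighborhood of $p$ onto a neighborhood of $p$; writing $f^{*}$ as the composition of $f$ with this map on the inside and the isometric involution of $B$ on the outside shows $f^{*}\in D_p(A,B)$, with $Df^{*}(x)(h)=\big(Df(x^{*})(h^{*})\big)^{*}$. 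The identities of the involutions on $A$ and $B$ then give $(f+g)^{*}=f^{*}+g^{*}$, $(\lambda f)^{*}=\bar\lambda f^{*}$, $(fg)^{*}=g^{*}f^{*}$ and $(f^{*})^{*}=f$ pointwise, so $D_p(A,B)$ is a $*$-algebra.

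Next I would treat the norm \eqref{3.1}. Positive definiteness, homogeneity and the triangle inequality are inherited from the two suprema, and finiteness is built into the elements under consideration; isometry of the involution, $\|f^{*}\|=\|f\|$, follows by re-indexing the two suprema through the isometries $a\mapsto a^{*}$ on $A$ and $b\mapsto b^{*}$ on $B$. Submultiplicativity is the delicate point and is where I expect the main difficulty. The value part is immediate, $\sup_{a}\|f(a)g(a)\|\le(\sup_a\|f(a)\|)(\sup_a\|g(a)\|)\le\|f\|\|g\|$, but the derivative part must be controlled through the Leibniz rule: $\|D(fg)(x)\|\le\|Df(x)\|\,\|g(x)\|+\|f(x)\|\,\|Dg(x)\|$, and the task is to dominate the supremum of this sum by the single product $\|f\|\|g\|$. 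Bounding the two terms separately by $\|f\|\|g\|$ each is wasteful, so the estimate must exploit the interplay between the value-supremum and the derivative-supremum encoded in the maximum defining \eqref{3.1}; making this estimate work is the crux of the whole argument.

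Finally, completeness. Because \eqref{3.1} dominates both $\sup_{a\in A}\|f(a)\|$ and $\sup_{x\in U}\|Df(x)\|$, a Cauchy sequence $(f_n)$ is simultaneously uniformly Cauchy as a sequence of $B$-valued functions on $A$ and as a sequence of $\mathcal L(A,B)$-valued functions on $U$. Completeness of $B$ gives a uniform limit $f$, necessarily continuous, and completeness of $\mathcal L(A,B)$ gives a uniform limit $g$ of the derivatives on $U$. The standard Banach-space-valued theorem on differentiating a uniformly convergent sequence whose derivatives converge uniformly then yields that $f$ is Fréchet differentiable on $U$ with $Df=g$; hence $f\in D_p(A,B)$ and $f_n\to f$ in the norm. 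Assembling these four steps establishes the Banach $*$-algebra structure, the submultiplicativity estimate being the step that carries the real weight.
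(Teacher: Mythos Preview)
Your outline is more thorough than the paper's own proof, which verifies only two things: that the involution is differentiable with $Df^{*}(x)(h)=(Df(x^{*})(h^{*}))^{*}$ and hence isometric for \eqref{3.1}, and that the space is complete, the latter done exactly along the lines of your final paragraph (uniform Cauchy in values and in derivatives, mean value inequality for $f_m-f_n$, pass to the limit). The paper does not touch the Leibniz rule or submultiplicativity at all, so on those points you are going beyond it rather than diverging from it.

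Your instinct that submultiplicativity is ``the step that carries the real weight'' is exactly right, but the difficulty is not merely technical: with the norm \eqref{3.1} as written the inequality $\|fg\|\le\|f\|\,\|g\|$ is false. Take $A=\mathbb{R}$, $B=\mathbb{C}$ (both Banach $*$-algebras), $p=0$, and $f(x)=e^{ix}$. Then $\sup_{a}|f(a)|=1$ and $\sup_{x}|f'(x)|=1$, so $\|f\|=1$, while $(f^{2})'(x)=2ie^{2ix}$ gives $\|f^{2}\|=2>1=\|f\|^{2}$. Thus the ``interplay'' you hope to exploit does not save the estimate; the Leibniz bound genuinely costs a factor~$2$, and one only obtains $\|fg\|\le 2\|f\|\,\|g\|$. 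That is enough to pass to an equivalent submultiplicative norm, but not to establish the theorem as stated. The paper's proof simply does not address this gap, so your hesitation here is well placed.
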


\begin{proof}
First we show that the involution $f\rightarrow f^*$ is differentiable and $ Df^*(x)(h)=(Df(x^*)(h^*))^*~~~~~(x,h\in U=B(p,r))$.
It is trivial that if $x\in U$, then $x^*\in U$ because $\|x-p\|=\|(x-p)^*\|=\|x^*-p\|$.
It can be shown easily that $Df^*(x)$ is a bounded linear map with $\|Df^*(x)\|=\|Df(x^*)\|$. Therefore
\begin{align*}
\|f^*(x+h)&-f^*(x)-Df^*(x)(h)\|=\|\big(f(x^*+h^*)-f(x^*)-Df(x^*)(h^*)\big)^*\|\\
&=\|f(x^*+h^*)-f(x^*)-Df(x^*)(h^*)\|\\
&\leq \epsilon \|h^*\|= \epsilon \|h\|.
\end{align*}
From $\|Df^*(x)\|=\|Df(x^*)\|$ and $\|f^*(a)\|=\|f(a^*)\|$, we obtain
\begin{align*}
\|f^*\|&=\max\left\{\sup_{x\in U}\|Df^*(x)\|,~\sup_{a\in A}\|f^*(a)\|\right\}\\
&=\max\left\{\sup_{x\in U}\|Df(x^*)\|,~\sup_{a\in A}\|f(a^*)\|\right\}\\
&=\max\left\{\sup_{x^*\in U}\|Df(x^*)\|,~\sup_{a^*\in A}\|f(a^*)\|\right\}=\|f\|.
\end{align*}
Now, we show that $D_p(A,B)$ is complete.
There exists a bounded convex set $V(=B(p,r))$ containing $p$ such that $V\subseteq U$.
Suppose that $(f_n)$ is a Cauchy sequence in $D_{p}(A ,B)$, i.e.,
$$\|f_n(a)-f_m(a)\|\rightarrow 0~~(a\in A), \text{ and } \|Df_n(x)-Df_m(x)\|\rightarrow 0~~(x\in V).$$

Since $B$ is complete, therefore $L(A,B)$ the space of all bounded linear maps from $A$ into $B$, is complete.
 So there are functions $f,g$ such that $\sup_{a\in A}\|f_n(a)-f(a)\|\rightarrow 0$
and $\sup_{x\in V}\|Df_n(x)-g(x)\|\rightarrow 0$.
Given $\varepsilon > 0$, we can find $N\in \mathbb{N}$ such that for $m > n \geq N$ one has
\begin{align}
\|Df_m-Df_n\|_{\infty}&=\sup_{x\in V}\|Df_m(x)-Df_n(x)\|<\frac{\varepsilon}{3}\label{3.2}\\
\|g-Df_n\|_{\infty}&=\sup_{x\in V}\|g(x)-Df_n(x)\|<\frac{\varepsilon}{3}.\label{3.3}
\end{align}

We may suppose that there exist $a\in A$ such that, $p+a\in V$. Using Lipschitzian functions $f_m-f_n$, we obtain that

\begin{align*}
\|f_m(p+a)-f_m(p)&-(f_n(p+a)-f_n(p))\|\\
&\leq\sup_{0<\theta<1}\|Df_m(p+\theta a)-Df_n(p+\theta a)\|\|a\|\leq \frac{\varepsilon}{3}\|a\|.
\end{align*}

passing to the limit on $m$, we get
\begin{align}\label{3.4}
\|f(p+a)-f(p)-(f_n(p+a)-f_n(p))\|\leq \frac{\varepsilon}{3}\|a\|.
\end{align}
Utilizing differentiability $f_N$ and \eqref{3.3}, we have
\begin{align}\label{3.5}
\|f_N(p+a)-f_N(p)-g(p)(a)\|&\leq\|f_N(p+a)-f_N(p)-Df_N(p)(a)\|\notag\\
&+\|Df_N(p)(a)-g(p)(a)\|\leq \frac{\varepsilon}{3}\|a\|+\frac{\varepsilon}{3}\|a\|.
\end{align}
From \eqref{3.4} and \eqref{3.5}, we obtain
\begin{equation*}
\|f(p+a)-f(p)-g(p)(a)\|\leq \varepsilon\|a\|.
\end{equation*}
Therefore $D_p(A,B)$ is a Banach $*$-algebra.
\end{proof}


\end{document}